\documentclass[12pt]{article}
\usepackage{mathrsfs}
\usepackage{epic,eepic,epsf,epsfig}
\usepackage{amsfonts,srcltx,mathrsfs}
\usepackage{multirow}
\usepackage{amsmath}
\usepackage{amssymb}
\usepackage{amsbsy}
\usepackage{graphicx}
\usepackage{amsfonts}
\usepackage{color}
\usepackage{setspace}

\newtheorem{lem}{Lemma}[section]%
\newtheorem{theorem}[lem]{Theorem}%

\def\nd{\mathrel{\bigm|\kern-.7em/}}

\def\f{\noindent}

\def\P\GammaL{\hbox{\rm P\GammaL}}

\begin{document}
\title{A lower bound of toughness of regular graphs: in terms of second largest eigenvalue}

\footnotetext{E-mails: zhangwq@pku.edu.cn}

\author{Wenqian Zhang\\
{\small School of Mathematics and Statistics, Shandong University of Technology}\\
{\small Zibo, Shandong 255000, P.R. China}}
\date{}
\maketitle

\begin{abstract}
Let $G$ be a connected (non-complete) $d$-regular graph with  $d\geq3$. Let $c(G-S)$ denote the number of components of $G-S$ for any cut $S$ of $G$. The toughness $t(G)$ of $G$ is defined as $\min\left\{\frac{|S|}{c(G-S)}\right\}$, where the minimum is taken over all proper cuts $S$ of $G$. Let $\lambda_{2}(G)$ denote the second largest eigenvalue of $G$. In this paper, we prove
$$t(G)\geq\min\left\{\frac{d+1}{d}(d-\lambda_{2}(G)),1\right\}.$$
\bigskip

\f {\bf Keywords:} eigenvalue; second largest eigenvalue; toughness; regular graph.\\
{\bf 2020 Mathematics Subject Classification:} 05C50.

\end{abstract}

 \baselineskip 17 pt

\section{Introduction}

In this paper, all graphs considered are finite, undirected and simple.  For a graph $G$, the vertex set and edge set of $G$ are denoted by $V(G)$ and $E(G)$, respectively. Let $e(G)=|E(G)|$.  For two disjoint subsets $X$ and $Y$ of $V(G)$, let $e_{G}(X,Y)$ be the number of edges between  $X$ and  $Y$ in $G$. For a subset $S$ of $V(G)$, let $G[S]$ denote the subgraph induced by $S$, and let $G-S=G[V(G)-S]$.  For an integer $n\geq1$, let $K_{n}$ denote the complete graph of order $n$. For any terminology used but not defined here, one may refer to \cite{BH}.

 Let $G$ be a graph with vertices $u_{1},u_{2},\ldots,u_{n}$. The {\em adjacency matrix} $A(G)$ of $G$ is an $n\times n$ square matrix $(a_{ij})$, where $a_{ij}=1$ if $u_{i}$ is adjacent to $u_{j}$, and $a_{ij}=0$ otherwise. The eigenvalues of $G$ are the eigenvalues of its adjacency matrix $A(G)$, which are denoted by $\lambda_{1}(G)\geq \lambda_{2}(G)\geq\cdots\geq \lambda_{n}(G)$. As is well known, if $G$ is $d$-regular, then $\lambda_{1}(G)=d$. Moreover, $\lambda_{2}(G)<d$ if $G$ is also connected.

Let $G$ be a connected (and non-complete) graph of order $n$.  A {\em cut} of $G$ is a set $S\subseteq V(G)$ such that $G-S$ is disconnected.  Let $c(G-S)$ denote the number of components of $G-S$ for any cut $S$. Chv\'{a}tal \cite{C} introduced the concept of {\em toughness} $t(G)$ of $G$. It is defined as $t(G)=\min\left\{\frac{|S|}{c(G-S)}\right\}$, where the minimum is taken over all proper cuts $S$ of $G$. Toughness is a measure of the connectivity of graphs. 
Now assume that $G$ is $d$-regular. The study of relationship between $t(G)$ and eigenvalues of $G$ has been a hot topic. Set $\lambda=\max\left\{\lambda_{2}(G),-\lambda_{n}(G)\right\}$. Alon \cite{A} proved that $t(G)>\frac{1}{3}(\frac{d^{2}}{d\lambda+\lambda^{2}}-1)$, by which graphs with large toughness and large girth are constructed. Around the same time, Brouwer \cite{B} independently discovered a slightly better bound $t(G)>\frac{d}{\lambda}-2$. And in \cite{B1}, Brouwer made the following conjecture: 
 $$t(G)\geq\frac{d}{\lambda}-1.$$
 It is mentioned that there are infinitely many graphs $G$ satisfying $t(G)=\frac{d}{\lambda}$ (for example, strongly regular graphs \cite{B,CW}, and many Kneser graphs \cite{POH}).
Gu \cite{G0} improved the result of Brouwer \cite{B}, and settled this conjecture in \cite{G}.
Haemers \cite{H} proposed a stronger conjecture:
$$t(G)\geq\frac{d-\lambda_{2}(G)}{-\lambda_{n}(G)}.$$
There are very few progress on Haemers' conjecture \cite{H}.
 
 For the special case of toughness 1, Liu and Chen \cite{LC} improved the previous results as follows.

\begin{theorem}{\rm (Liu and Chen \cite{LC})}\label{toughness Liu}
Let $G$ be a connected $d$-regular graph with $d\geq3$. If
\begin{align*}
\begin{split}
\lambda_{2}(G)<\left\{
\begin{array}{lll}
d-1+\frac{3}{d+1} &for ~even~d,\\
\\
d-1+\frac{2}{d+1} &for ~odd~d,
\end{array}
\right.
\end{split}
\end{align*}
then $t(G)\geq1$.
\end{theorem}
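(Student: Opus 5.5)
We argue by contradiction: assume $\lambda_{2}(G)$ is below the stated bound but $t(G)<1$. By the definition of toughness there is then a cut $S$ with $s=|S|$ and $q=c(G-S)\geq s+1$. Let $H_{1},\ldots,H_{q}$ be the components of $G-S$, with $n_{i}=|V(H_{i})|$ and $e_{i}=e_{G}(V(H_{i}),S)$. The plan is to find two components with few edges to $S$, and then use interlacing to force $\lambda_{2}(G)$ up.

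\textbf{Step 1: two components with small boundary.} Every vertex of $S$ has degree $d$, so $\sum_{i}e_{i}\leq ds\leq d(q-1)$. Since $G$ is connected, every $e_{i}\geq1$. Suppose at most one component, say $H_{j}$, had $e_{j}\leq d-1$. Then
$$\sum_{i}e_{i}\geq d(q-1)+e_{j}>d(q-1),$$
a contradiction. So at least two components, say $H_{1}$ and $H_{2}$, satisfy $e_{i}\leq d-1$.

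\textbf{Step 2: these components are large, with a parity constraint.} If $n_{i}\leq d$, each vertex of $H_{i}$ has at most $n_{i}-1$ neighbours inside $H_{i}$. Hence
$$e_{i}\geq n_{i}(d-n_{i}+1)\geq d,$$
because $n(d-n+1)\geq d$ for $1\leq n\leq d$ (it is concave in $n$ and equals $d$ at both ends). Therefore $n_{1},n_{2}\geq d+1$. Moreover $dn_{i}-e_{i}=2e(H_{i})$, so $e_{i}\equiv dn_{i}\pmod 2$. For even $d$ this makes $e_{i}$ even, so $e_{i}\leq d-2$. For odd $d$ we keep $e_{i}\leq d-1$.

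\textbf{Step 3: spectral bound.} The graph $H_{1}\cup H_{2}$ is an induced subgraph of $G$. By Cauchy interlacing,
$$\lambda_{2}(G)\geq\lambda_{2}(H_{1}\cup H_{2})\geq\min\{\lambda_{1}(H_{1}),\lambda_{1}(H_{2})\}.$$
The largest eigenvalue is at least the average degree, so $\lambda_{1}(H_{i})\geq d-\frac{e_{i}}{n_{i}}$, and this increases in $n_{i}$ and decreases in $e_{i}$.
\begin{itemize}
\item For even $d$: $\lambda_{1}(H_{i})\geq d-\frac{d-2}{d+1}=d-1+\frac{3}{d+1}$.
\item For odd $d$: $\lambda_{1}(H_{i})\geq d-\frac{d-1}{d+1}=d-1+\frac{2}{d+1}$.
\end{itemize}
In both cases $\lambda_{2}(G)$ is at least the threshold, contradicting the hypothesis. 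Hence $t(G)\geq1$.

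The main point needing care is Step 2: the parity refinement that gives the sharper even-$d$ constant, and the check that the extremal case is $n_{i}=d+1$ with the largest admissible $e_{i}$. Step 3 is then routine, since $d-e/n$ is monotone in both variables.
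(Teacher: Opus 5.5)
The paper does not prove this statement; it only cites it from Liu and Chen, so there is no in-paper proof to compare against. Your proof is correct, and it uses the same machinery the paper uses in its proof of Theorem~\ref{main}: the edge count $e_G(S,V(G)-S)\le sd\le d(c-1)$ forcing two components with fewer than $d$ edges to $S$, the bound $n\ge d+1$ from $e\ge n(d+1-n)\ge d$, and interlacing against the average degrees of two components as in Case~1. The one ingredient the paper does not contain is your parity observation $e_i\equiv dn_i \pmod 2$. It gives $e_i\le d-2$ for even $d$, and that is exactly what produces the sharper constant $\frac{3}{d+1}$.
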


Cioab\u{a} and Wong \cite{CW} further improved Theorem \ref{toughness Liu} by obtaining a sharp upper bound of $\lambda_{2}(G)$ (see Theorem \ref{toughness CW}). Later, Cioab\u{a} and Gu \cite{CG2} also established eigenvalue conditions for $G$ with given edge-connectivity to
guarantee that $t(G)\geq1$.

\begin{theorem}{\rm (Cioab\u{a} and Wong \cite{CW})}\label{toughness CW}
Let $G$ be a connected $d$-regular graph with $d\geq3$. If
\begin{align*}
\begin{split}
\lambda_{2}(G)<\left\{
\begin{array}{lll}
\frac{d-2+\sqrt{d^{2}+12}}{2} &for ~even~d,\\
\\
\frac{d-2+\sqrt{d^{2}+8}}{2} &for ~odd~d,
\end{array}
\right.
\end{split}
\end{align*}
then $t(G)\geq1$.
\end{theorem}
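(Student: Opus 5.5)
We argue by contradiction. Suppose $t(G)<1$. Then there is a cut $S$ with $s=|S|$ and $c(G-S)\geq s+1$. Let $H_{1},\ldots,H_{c}$ be the components of $G-S$ and put $r_{i}=e_{G}(V(H_{i}),S)$. Since $G$ is $d$-regular, $\sum_{i}r_{i}\leq e_{G}(S,V(G)-S)\leq ds$, and $r_{i}\geq1$ for every $i$ because $G$ is connected.

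\textbf{Step 1: two components with small boundary.} We claim at least two components satisfy $r_{i}\leq d-1$. If at most one did, the other $c-1\geq s$ components would each contribute $r_{i}\ge d$. The remaining one contributes at least $1$, so $\sum r_{i}\geq ds+1$, a contradiction. Call these two components $H_{1},H_{2}$, with $h_{i}=|V(H_{i})|$.

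\textbf{Step 2: structure of such a component.} Fix $H=H_{i}$ with $h=|V(H)|$ and $r=r_i$.
\begin{itemize}
\item If $h\leq d$, each vertex of $H$ has at least $d-h+1$ neighbours outside $H$. Hence $r\geq h(d-h+1)\geq d$, a contradiction. So $h\geq d+1$.
\item Counting degrees gives $dh-r=2e(H)$, so $r\equiv dh \pmod 2$.
\item For $d$ even, this forces $r$ to be even, so $r\leq d-2$.
\item For $d$ odd, either $h=d+1$ and $r$ is even, or $h\geq d+2$.
\end{itemize}

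\textbf{Step 3: interlacing.} $H_{1}$ and $H_{2}$ are vertex-disjoint with no edges between them. So $H_{1}\cup H_{2}$ is an induced subgraph of $G$ whose adjacency matrix is block diagonal. Cauchy interlacing then gives
$$\lambda_{2}(G)\geq\lambda_{2}(H_{1}\cup H_{2})\geq\min\{\lambda_{1}(H_{1}),\lambda_{1}(H_{2})\}.$$
It therefore suffices to show that every connected graph $H$ of the type in Step 2 satisfies $\lambda_{1}(H)\geq\frac{d-2+\sqrt{d^{2}+12}}{2}$ for even $d$, and $\lambda_{1}(H)\geq\frac{d-2+\sqrt{d^{2}+8}}{2}$ for odd $d$. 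That contradicts the hypothesis.

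\textbf{Step 4: the extremal estimate (main obstacle).} The crude bound $\lambda_{1}(H)\geq 2e(H)/h=d-r/h$ is enough once $h$ is large relative to $d$. Small $h$, in particular $h=d+1$ and $h=d+2$, must be handled sharply.

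Partition $V(H)$ into $X$, the vertices with a neighbour in $S$, and $Y=V(H)-X$. Take the test vector constant on $X$ and constant on $Y$. By the Rayleigh quotient, $\lambda_{1}(H)$ is at least the largest eigenvalue of the $2\times 2$ matrix of average degrees between the parts. We will show this matrix is smallest in the extremal configurations:
\begin{itemize}
\item for even $d$: $h=d+1$, $r=d-2$, so $H$ is $K_{d+1}$ with $(d-2)/2$ disjoint edges removed;
\item for odd $d$: the analogous case with $r=d-1$, and the case $h=d+2$ with $r=d-2$ or $d$-bounded variants.
\end{itemize}
We solve the resulting quadratics and check that they produce exactly $\frac{d-2+\sqrt{d^{2}+12}}{2}$ and $\frac{d-2+\sqrt{d^{2}+8}}{2}$.

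For the remaining cases, we show $d-r/h$ already exceeds the target once $h$ is moderately larger than $d$. The few intermediate values of $h$ are handled by the same two-part quotient argument. The monotonicity comparison across these cases is where the careful computation lies.
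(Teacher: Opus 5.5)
The paper does not prove this statement; it quotes it from Cioab\u{a} and Wong. Your Steps 1--3 are correct. They parallel the argument the paper uses for Theorem~\ref{main}: two components send fewer than $d$ edges to $S$, each has order at least $d+1$, and interlacing is applied to their union. You add the useful parity refinement.

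The gap is Step 4, which carries the entire quantitative content of the theorem. You only announce it: ``we will show this matrix is smallest'', ``we solve the resulting quadratics and check'', ``the few intermediate values of $h$ are handled\ldots''. Nothing is actually proved there. The plan also misplaces the difficulty. The cases $h=d+2$ and any ``intermediate'' $h$ need no sharp treatment at all. The one case that does need it, $h=d+1$, is never computed.

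Here is how to close it. Put $r^{*}=d-2$ for even $d$ and $r^{*}=d-1$ for odd $d$; by Step 2, $r\le r^{*}$. The threshold $\theta$ is the largest root of $\lambda^{2}-(d-2)\lambda-(2d-r^{*})$, and
\[
d-\theta=\frac{2r^{*}}{d+2+\sqrt{(d+2)^{2}-4r^{*}}}>\frac{r^{*}}{d+2}.
\]
\textbf{Case $h\ge d+2$.} Here $\lambda_{1}(H)\ge 2e(H)/h=d-r/h\ge d-r^{*}/(d+2)>\theta$. So the average-degree bound already suffices. For even $d$ this genuinely needs the parity bound $r\le d-2$.

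\textbf{Case $h=d+1$.} Every $v\in V(H)$ is non-adjacent in $H$ to exactly $r_v$ other vertices of $H$, where $r_v$ is its number of neighbours in $S$. Let $X=\{v: r_v\ge 1\}$ and $x=|X|\le r$. Then all non-edges of $H$ lie inside $X$, and each vertex of $Y$ is adjacent to every other vertex of $H$. The quotient matrix and its characteristic polynomial are
\[
B=\begin{pmatrix} x-1-\frac{r}{x} & d+1-x\\ x & d-x\end{pmatrix},\qquad
\det(\lambda I-B)=\lambda^{2}-(d-1)\lambda-d+r+\frac{r}{x}(\lambda-d).
\]
For $\lambda<d$, use $r/x\ge 1$ and $r\le r^{*}$. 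Then this polynomial is at most $\lambda^{2}-(d-2)\lambda-(2d-r^{*})$, which vanishes at $\theta$. Hence $\lambda_{1}(H)\ge\lambda_{1}(B)\ge\theta$, with equality for $K_{d+1}$ minus a matching of $r^{*}/2$ edges.

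Combined with your Step 3, this gives $\lambda_{2}(G)\ge\theta$, the desired contradiction. No monotonicity comparison across further cases is needed.
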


Clearly, $t(G)\leq1$ for any  $d$-regular bipartite graph $G$. Zhang \cite{Z} obtained a sharp upper bound of $\lambda_{2}(G)$ to guarantee  $t(G)>1$ when $G$ is a non-bipartite $d$-regular graph.

\begin{theorem}{\rm (Zhang \cite{Z})}\label{main2}
Let $G$ be a connected non-bipartite $d$-regular graph with $d\geq3$. If $\lambda_{2}(G)<\frac{\sqrt{1+4(d-1)^{2}}-1}{2}$, then $t(G)>1$.
\end{theorem}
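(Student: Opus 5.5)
We argue by contradiction, combining an edge-counting argument with Cauchy interlacing. Put $\theta=\frac{\sqrt{1+4(d-1)^{2}}-1}{2}$, so that $\theta^{2}+\theta=(d-1)^{2}$ and $d-2<\theta<d-1$.

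Suppose $t(G)\le 1$. Then there is a cut $S$ with $|S|=s$ and $c:=c(G-S)\ge s$. Let $G_1,\dots,G_c$ be the components of $G-S$, with $n_i=|V(G_i)|$ and $e_i=e_G(V(G_i),S)$. Since $G$ is connected, $e_i\ge 1$. Since $G$ is $d$-regular,
$$\sum_{i=1}^{c} e_i = ds-2e(G[S])\le ds\le dc .$$
The spectral tool is interlacing. If $H_1,H_2$ are vertex-disjoint induced subgraphs of $G$ with no edges between them, then $\lambda_2(G)\ge\min\{\lambda_1(H_1),\lambda_1(H_2)\}$. Moreover $\lambda_1(G_i)\ge \frac{2e(G_i)}{n_i}=d-\frac{e_i}{n_i}$. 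So it suffices to find two components, or more generally two non-adjacent induced pieces, whose spectral radius is at least $\theta$.

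\textbf{Step 1: local analysis of a single component.} If $n_i\le d$, then each vertex of $G_i$ has at least $d-n_i+1$ neighbours in $S$, so $e_i\ge n_i(d-n_i+1)\ge d$. Equality holds only when $n_i\in\{1,d\}$, and for $n_i=d$ we get $G_i=K_d$ with $\lambda_1=d-1>\theta$. If $n_i\ge d+1$ and $e_i\le d$, then $\lambda_1(G_i)\ge d-\frac{d}{d+1}$. A parity argument sharpens this: $dn_i-e_i$ is even, so $e_i\equiv dn_i \pmod 2$. Using a refined Rayleigh quotient (weighting the vertices incident to $S$ less) should push the bound above $\theta$. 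This is where the exact constant enters, since $\theta^{2}+\theta=(d-1)^{2}$ should arise as the spectral radius of an extremal small component. So every non-singleton component with $e_i\le d$ has $\lambda_1(G_i)\ge\theta$; I would also want strict inequality except in a carefully identified extremal case.

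\textbf{Step 2: counting.} Call a component \emph{good} if it is not a single vertex and $e_i\le d$. If there are at least two good components, Step 1 and interlacing give $\lambda_2(G)\ge\theta$, a contradiction. Otherwise let $k$ be the number of singleton components and $m$ the number of components with $e_i\ge d+1$. Then
$$dk+(d+1)m+\text{(at most one good term)}\le ds-2e(G[S]) .$$
Together with $k+m+1\ge c\ge s$, this forces $m\le 1$, $e(G[S])$ very small, and almost all components singletons with $e_i=d$.

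\textbf{Step 3: the near-bipartite endgame.} In the extreme case where all components are singletons, $c=s$ and $e(G[S])=0$. Then $S$ and $V(G)-S$ are both independent, so $G$ is bipartite, contradicting the hypothesis. The remaining configurations have one or two exceptional components together with a few edges inside $S$, and here non-bipartiteness must be exploited quantitatively. The plan is to take as the second piece in the interlacing an induced subgraph built from $S$ together with the singleton components. In these configurations $S$ together with the singletons induces a near-regular, nearly bipartite graph whose spectral radius is close to $d$. I would apply interlacing to this piece together with the exceptional component when it is non-adjacent to them; when it is not, I would instead use a test vector supported on $S\cup\{\text{singletons}\}$ orthogonal to the all-ones vector.

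I expect Step 3, together with pinning down the exact constant $\theta$ in Step 1, to be the main obstacle. This is where the case analysis by parity of $d$ and small $s$ will be delicate, and where sharpness examples (a $K_{d}$-like or $(d+1)$-vertex component attached by few edges) dictate the threshold.
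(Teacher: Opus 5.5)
The paper does not prove this statement; it only quotes it from Zhang's earlier paper. I am therefore judging your proposal on its own. It has a genuine gap, located where you suspected, but for different reasons than you give.

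\textbf{What works, and what is misdiagnosed.} Your easy cases are fine. Two non-singleton components with $e_i\le d$ give $\lambda_2(G)>\theta$ by interlacing. If there are none, counting forces every component to be a singleton with $c=s$ and $e(G[S])=0$, so $G$ is bipartite. However, three of your diagnoses are off.
\begin{itemize}
\item Step 1 is not where $\theta$ enters. A non-singleton component with $e_i\le d$ is either $K_d$ or has $n_i\ge d+1$. Either way its spectral radius is at least $d-1>\theta$. No parity refinement is needed, and $\theta$ is not the spectral radius of any component.
\item The counting in Step 2 does not give $m\le 1$. With exactly one good component $Q_1$, it gives $c=s$ and $\sum_{\text{bad}}(e_i-d)+e_1+2e(G[S])=d$. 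This allows up to $d-1$ bad components.
\item Non-bipartiteness is used only to exclude the all-singleton case. It plays no role in what remains, so there is nothing to "exploit quantitatively" in Step 3.
\end{itemize}

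\textbf{The gap: Step 3.} The tools you sketch there cannot reach $\theta$. Let $G$ consist of $K_d$, an independent set $S$ of size $d$ joined to $K_d$ by a perfect matching, and an independent set $I$ of size $d-1$ completely joined to $S$. This graph is $d$-regular and non-bipartite, with $c(G-S)=d=|S|$. The equitable partition $\{V(K_d),S,I\}$ has quotient matrix
\[
\begin{pmatrix} d-1&1&0\\ 1&0&d-1\\ 0&d&0\end{pmatrix}
\]
with characteristic polynomial $(x-d)(x^2+x-(d-1)^2)$. All other eigenvalues of $G$ are $0$ and $\frac{-1\pm\sqrt5}{2}$, so $\lambda_2(G)=\theta$ exactly.

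In this extremal graph both of your Step 3 tools fail:
\begin{itemize}
\item $S\cup I$ is adjacent to the good component $K_d$. Deleting $N_S(K_d)=S$ leaves only isolated vertices. In fact no two non-adjacent induced subgraphs both have spectral radius $\ge\theta$, so two-piece interlacing cannot certify the bound.
\item Since $S\cup I$ induces $K_{d,d-1}$, every vector supported on $S\cup I$ and orthogonal to $\mathbf 1$ has Rayleigh quotient $\le 0$.
\end{itemize}
So $\theta$ is a global quantity produced by the interaction of $Q_1$, $S$ and the remaining components.

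\textbf{What is missing.} You need a quotient-matrix interlacing argument in which $S$ is one of the parts. For instance, use the partition $\{S,Q_1,\dots,Q_c\}$ of Section 3. Then $\lambda_2(G)\ge\rho$, the largest root of $h(B,x)$, where $a_j=d-\frac{e_j}{n_j}$ and $b_j=\frac{e_j}{s}$.
\begin{itemize}
\item If some $Q_j$ with $j\ne1$ has $a_j\ge\theta$, two-piece interlacing finishes.
\item Otherwise $a_j<\theta<a_1$ for all $j\neq 1$. You must then show $h(B,\theta)\le0$, i.e.\ $1+\sum_j\frac{b_j}{\theta-a_j}\ge0$, using the edge identity above.
\end{itemize}
On the example this holds with equality, since there $h(B,x)=x^{d-2}(x^2+x-(d-1)^2)$. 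Lemma 2.2 is far too weak for this step: on the example it only gives $\rho\ge\frac{d-1}{2}$.
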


Chen, Lin and Wang \cite{CLW} gave a sharp upper bound of $\lambda_{2}(G)$ to guarantee  $t(G)\geq\frac{1}{b}$, where $b\geq2$  is an integer.

\begin{theorem}{\rm (Chen, Lin and Wang \cite{CLW})}\label{1/b}
Let $G$ be a connected $d$-regular graph with integers $d\leq b^{2}+b$ and $b\geq2$. If
\begin{equation}
\lambda_{2}(G)<\left\{
\begin{array}{ll}
\frac{d-2+\sqrt{d^{2}+4(d-b)+8}}{2} & for~odd~b,\\
\\
\frac{d-2+\sqrt{d^{2}+4(d-b)+4}}{2}& for~even~b,
\end{array}
\right.\notag
\end{equation}
then $t(G)\geq\frac{1}{b}$.
\end{theorem}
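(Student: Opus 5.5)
The plan is to argue by contradiction and reduce the problem to a spectral estimate for a single component, using Cauchy interlacing. Suppose $t(G)<\frac{1}{b}$. Then there is a cut $S$ with $c:=c(G-S)\geq b|S|+1$. Let $H_1,\dots,H_c$ be the components of $G-S$, with $n_i=|V(H_i)|$ and $r_i=e_G(V(H_i),S)$. Since $G$ is $d$-regular, $\sum_i r_i\leq d|S|$.

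\textbf{Step 1: two components have few edges to $S$.} Order the components so that $r_1\leq r_2\leq\cdots\leq r_c$. Then
$$(c-1)r_2\leq \sum_{i\geq 2}r_i\leq d|S|,$$
so $r_2\leq \frac{d|S|}{b|S|}=\frac{d}{b}\leq b+1$. Using the strict inequality coming from $r_1\geq 1$ (which holds because $G$ is connected), together with an integrality and parity argument, we should obtain $r_1\leq r_2\leq b$. Exactly here the hypothesis $d\leq b^2+b$ is used. Since $b<d$, neither component can be small: if $n_i\leq d$, every vertex of $H_i$ has at least $d-n_i+1$ neighbours in $S$, so $r_i\geq n_i(d-n_i+1)\geq d$. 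Hence $n_1,n_2\geq d+1$. Moreover $dn_i-r_i=2e(H_i)$ is even. So if $d$ and $r_i$ have different parities, then $n_i$ must be even, and for odd $d$ this forces $n_i\geq d+2$ in the relevant case. This parity information is what should produce the split between odd and even $b$ in the statement.

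\textbf{Step 2: interlacing.} The subgraph $H_1\cup H_2$ is induced in $G$, and its two largest eigenvalues are $\lambda_1(H_1)$ and $\lambda_1(H_2)$. By Cauchy interlacing,
$$\lambda_2(G)\geq \min\{\lambda_1(H_1),\lambda_1(H_2)\}.$$
It therefore suffices to prove the following: if $H$ is a connected graph with $n\geq d+1$ vertices and maximum degree at most $d$, in which all but $r\leq b$ units of degree are present (so that $e(H)=\frac{dn-r}{2}$), and with the parity restriction above, then $\lambda_1(H)$ is at least the threshold in the statement. This contradicts the hypothesis.

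\textbf{Step 3: spectral bound for $H$ (main obstacle).} The crude bound $\lambda_1(H)\geq \frac{2e(H)}{n}=d-\frac{r}{n}$ is too weak, so a sharper estimate is needed. I would first split $V(H)=A\cup B$, where $A$ is the set of vertices with a neighbour in $S$, so $|A|\leq r\leq b$. I would then use the Rayleigh quotient with a test vector taking value $x$ on $A$ and $1$ on $B$, and optimise over $x$. Since vertices in $B$ have full degree $d$ inside $H$, this yields a $2\times 2$ quotient-type bound. The quantity $\lambda_1(H)$ should be minimised in the extremal configuration $n=d+1$, or $n=d+2$ when parity forces it. In that configuration $H$ is $K_{d+1}$ with a sparse set of edges removed, and the resulting quadratic is $\lambda^2-(d-2)\lambda-\cdots=0$, whose largest root is $\frac{d-2+\sqrt{d^2+4(d-b)+8}}{2}$, or the same with $+4$ in place of $+8$. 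The delicate part is to show that larger $n$, or a different distribution of the $r$ deficient edge-ends over $A$, cannot yield a smaller spectral radius. I would handle this by showing that the Rayleigh bound is monotone in $n$, using $e(B)$ large when $n\geq d+2$. I would treat the cases $|A|=r$ and $|A|<r$ separately, the first being the worst case by convexity.
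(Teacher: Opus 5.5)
The paper gives no proof of this statement; it only quotes it from Chen, Lin and Wang. So I judge your outline on its own terms.

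Steps 1 and 2 are sound. From $(c-1)r_2\le d|S|-r_1\le d|S|-1$ you get $r_2<d/b\le b+1$, hence $r_2\le b$ by integrality alone. The same inequality $1\le r_2<d/b$ is what yields $b<d$, which you used without proof. The bounds $n_1,n_2\ge d+1$ and $\lambda_2(G)\ge\min\{\lambda_1(H_1),\lambda_1(H_2)\}$ are fine.

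Your parity bookkeeping is wrong, though. Since $dn_i-r_i$ is even, every $r_i$ is even when $d$ is even, while for odd $d$ one has $n_i\equiv r_i \pmod 2$. So for odd $d$ an even $r_i$ still permits $n_i=d+1$, and it is an \emph{odd} $r_i$ that forces $n_i\ge d+2$.

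That last case is where Step 3 fails, and it cannot be repaired. Suppose $d$ and $b$ are both odd and $b^2<d\le b^2+b$. Then $r_2=b$ with $n_2=d+2$ is possible, and such a component can have spectral radius below the odd-$b$ bound $\frac{d-2+\sqrt{d^2+4(d-b)+8}}{2}$. For example, $\overline{C_b\cup\frac{d+2-b}{2}K_2}$ has $\lambda_1=\frac{d-3+\sqrt{(d+3)^2-4b}}{2}$, which is smaller whenever $b<d+2$. So your claim that the parity-forced $(d+2)$-vertex configuration reproduces the stated threshold is false, and in fact the statement as quoted fails.

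Here is an explicit counterexample with $d=11$, $b=3$. Let $H$ be the complement of $K_3\cup 5K_2$ in $K_{13}$. It has three pairwise nonadjacent vertices of degree $10$ and ten of degree $11$. This split is an equitable partition with quotient rows $(0,10)$ and $(3,8)$, so $\lambda_1(H)=4+\sqrt{46}$. Join an independent set $\{s_1,s_2,s_3\}$ to $11$ copies of $H$, with $s_j$ adjacent to the $j$-th degree-$10$ vertex of each copy.
\begin{itemize}
\item This $G$ is connected and $11$-regular, and $t(G)\le 3/11<1/3$.
\item A routine computation gives its spectrum. Use the equitable partition into the $s_j$, the $33$ degree-$10$ vertices and the $11$ blocks of degree-$11$ vertices, plus zero-sum vectors on those blocks. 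The eigenvalues are $11$, $4\pm\sqrt{46}$ (multiplicity $10$ each), $\frac{-3\pm\sqrt{41}}{2}$, $\pm\sqrt{11}$, $0$ and $-2$.
\item Thus $\lambda_2(G)=4+\sqrt{46}\approx 10.782<\frac{9+\sqrt{161}}{2}\approx 10.844$, the stated bound, yet $t(G)<1/b$.
\end{itemize}

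Outside this regime, that is, if $d$ or $b$ is even or $d\le b^2$, your plan does close, and more simply than you proposed. Set $T_r=\frac{d-2+\sqrt{(d+2)^2-4r}}{2}$, the largest root of $p_r(x)=x^2-(d-2)x-(2d-r)$.
\begin{itemize}
\item One has $T_r<d-\frac{r}{d+2}$, so every component with $n\ge d+2$ is handled by $\lambda_1(H)\ge 2e(H)/n$.
\item For $n=d+1$ with $a\le r$ deficient vertices, the $2\times 2$ quotient polynomial satisfies $p_a(x)=p_r(x)+(\frac{r}{a}-1)(x-d)$. Hence $p_a(T_r)\le 0$ and $\lambda_1(H)\ge T_r$.
\end{itemize}
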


In this paper, we gave a lower bound of $t(G)$ in terms of $\lambda_{2}(G)$.

\begin{theorem}\label{main}
Let $G$ be a connected $d$-regular graph with $d\geq3$. Then $$t(G)\geq\min\left\{\frac{d+1}{d}(d-\lambda_{2}(G)),1\right\}.$$
\end{theorem}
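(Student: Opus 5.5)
We argue by contradiction, starting from a cut $S$ with $c=c(G-S)\ge 2$ components $C_1,\dots,C_c$. Set $\mu=d-\lambda_2(G)>0$ and $\theta=\min\{\frac{d+1}{d}\mu,1\}$; the goal is $|S|\ge \theta c$. Since every edge leaving a component $C_i$ ends in $S$, we have
$$\sum_{i=1}^{c} e_G(C_i,S)\le d|S|.$$
So it suffices to show that, on average over the components, $e_G(C_i,S)\ge \theta d=\min\{(d+1)\mu,d\}$. Call a component \emph{good} if $e_G(C_i,S)\ge \min\{(d+1)\mu,d\}$, and \emph{bad} otherwise.

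\emph{Small components are good.} Suppose $|C_i|=a\le d$. Each vertex of $C_i$ has at most $a-1$ neighbours inside $C_i$, so it has at least $d-a+1$ neighbours in $S$. Hence $e_G(C_i,S)\ge a(d-a+1)$. This is concave in $a$ and equals $d$ at both $a=1$ and $a=d$, so $e_G(C_i,S)\ge d$.

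\emph{Two large bad components cannot coexist.} Take two components $C_i,C_j$ of sizes $a,b\ge d+1$. Use the test vector $x$ equal to $b$ on $C_i$, $-a$ on $C_j$, and $0$ elsewhere; it is orthogonal to the all-ones vector. The Rayleigh principle for $dI-A(G)$ gives $x^{T}(dI-A)x\ge \mu\|x\|^2$. The only nonzero terms on the left come from edges between $C_i\cup C_j$ and $S$, so
$$e_G(C_i,S)\,b^2+e_G(C_j,S)\,a^2\ \ge\ \mu\, ab(a+b).$$
If both were $<(d+1)\mu$, the left side would be $<(d+1)\mu(a^2+b^2)$. But $ab(a+b)=a\cdot ab+b\cdot ab\ge (d+1)(a^2+b^2)$ when $a,b\ge d+1$, a contradiction. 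Combined with the previous step, at most one component is bad, and it has size at least $d+1$.

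\emph{The remaining obstacle: one bad component $B$.} Here the good components contribute at least $(c-1)\theta d$ to the edge sum, so we still need $e_G(B,S)\ge \theta d$, or else a surplus from another component that compensates. I expect this to be the hardest step, and I would try two routes in order.

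\begin{enumerate}
\item Apply the same two-component inequality to $B$ (size $b\ge d+1$) and a component $C$ of size $a$ that is not bad. Instead of bounding each term separately, aim to show $e_G(B,S)+e_G(C,S)\ge 2\min\{(d+1)\mu,d\}$. If $a\ge d+1$, this is where the slack in $ab(a+b)\ge (d+1)(a^2+b^2)$ should be used. If $a\le d$, use $e_G(C,S)\ge a(d-a+1)$, which is well above $d$ for intermediate $a$, and optimise over $a$.
\item If that fails, replace the pairwise argument by a single test vector built from all components at once: constant $\alpha_i$ on $C_i$, $0$ on $S$, with $\sum_i \alpha_i|C_i|=0$. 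Choose the $\alpha_i$ to weight $B$ against the union of all the others. This turns the spectral bound into $\sum_i e_G(C_i,S)\alpha_i^2\ge \mu\sum_i|C_i|\alpha_i^2$, which can then be compared with the total edge bound $d|S|$.
\end{enumerate}

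Either route should give $\sum_i e_G(C_i,S)\ge c\,\theta d$, and hence $|S|/c\ge\theta$, as required.
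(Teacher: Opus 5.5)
Your reduction to showing that the average of $e_G(C_i,S)$ is at least $\min\{(d+1)\mu,d\}$ is correct, and so are the small-component bound and the claim that two large bad components cannot coexist. The problem is that the proof stops exactly where the work is. The case of a single bad component is not proved, and neither route you suggest can prove it, because the test vector that vanishes on $S$ and has mean zero is too weak.

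That vector only gives $e_i b^2+e_j a^2\ge \mu ab(a+b)$. When one component is huge, this inequality is dominated by the edge count of the \emph{other} component. Concretely, take $d=3$ and $|S|=1$. Let one component have order $4$ ($K_4$ minus an edge) with $2$ edges to $S$, and the other have odd order $b\ge 35$ with a single edge to $S$; this is realizable.
\begin{itemize}
\item The value $\mu=0.45$ satisfies $2b^2+16\ge 4\mu b(b+4)$, and every other inequality you derive holds trivially.
\item The big component is bad and the small one is good.
\item Yet $e_B+e_C=3<2\min\{4\mu,3\}=3.6$.
\end{itemize}
So route 1 cannot reach its target from the inequalities available to it. For $c=2$, route 2 is literally the same vector as route 1, so it fails on this configuration too. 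The true $\mu$ of this graph is below $0.19$, but your vector cannot detect it.

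The missing idea is a ``series'' rather than ``parallel'' pairwise bound. Give $C_i$ the value $\alpha$, give $C_j$ the value $\beta$, and give every other vertex, including $S$, the intermediate value $\gamma=\frac{e_i\alpha+e_j\beta}{e_i+e_j}$. Subtract the mean instead of imposing it.
\begin{itemize}
\item The numerator becomes $x^{T}(dI-A)x=\frac{e_ie_j}{e_i+e_j}(\alpha-\beta)^2$.
\item The denominator satisfies $\|x-\bar x\mathbf{1}\|^2\ge \frac{|C_i||C_j|}{|C_i|+|C_j|}(\alpha-\beta)^2$.
\item Hence $\mu\le \frac{e_ie_j}{e_i+e_j}\bigl(\frac{1}{|C_i|}+\frac{1}{|C_j|}\bigr)$.
\end{itemize}
This is exactly what the paper extracts from the quotient matrix of $\{S,Q_1,\dots,Q_c\}$ via interlacing and Lemma 2. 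The paper needs a case split there to get the right ordering; the test vector above works for any pair.

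With this bound, no good/bad classification is needed. Take the two components with the fewest edges to $S$; their counts sum to at most $2d|S|/c$. If $|S|<c$, both counts are below $d$, since otherwise $d|S|\ge\sum e>d(c-1)$. By your small-component bound, both components then have order at least $d+1$. Therefore $\mu\le\frac{e_i+e_j}{4}\cdot\frac{2}{d+1}\le \frac{d|S|}{(d+1)c}$, which is the theorem.
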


Clearly, Theorem \ref{main} improved Haemers' conjecture \cite{H} for regular graphs $G$
 with $t(G)<1$. 
 
We point out that the bound in Theorem \ref{main} is essentially tight for regular graphs $G$ with $t(G)<1$. For example, let $k$ be an even integer with $2\leq k<d$, and let $K^{-k}_{d+1}$ denote the graph obtained from $K_{d+1}$ by deleting a matching with $\frac{k}{2}$ edges. Let $H_{d}$ be a $d$-regular graph of order $k+d(d+1)$ obtained from a set $S$ of $k$ independent vertices  and $d$ copies of $K^{-k}_{d+1}$, by adding a matching with $k$ edges between $S$ and the $k$ vertices of degree $d-1$ in each copy of  $K^{-k}_{d+1}$. Clearly, $t(H_{d})\leq\frac{k}{d}$ for considering the cut $S$. Similar to Lemma 8 of \cite{CGH} or 
Proposition 2 of \cite{CLW}, we can show that 
$$\lambda_{2}(H_{d})=\frac{d-2+\sqrt{d^{2}+4(d-k)+4}}{2}.$$
Thus 
$$d-\lambda_{2}(H_{d})=\frac{2k}{d+2+\sqrt{d^{2}+4(d-k)+4}}>\frac{k}{d+2}.$$
However, Theorem \ref{main} implies $t(G)>\frac{k}{d}$ if $d-\lambda_{2}(G)>\frac{k}{d+1}$, which is close to $\frac{k}{d+2}$. 

\medskip

The rest of the paper is organized as follows. In Section 2, we will prove a useful lemma on the largest root of a special kind of polynomials. In Section 3, we will give the proof of Theorem \ref{main}.

\section{A useful lemma}

Let $\widehat{y}$ represent the vanishing of $y$.
For example, 
$$y_{1}\cdots \widehat{y_{i}}\cdots y_{n}=\prod_{1\leq j\leq n,j\neq i}y_{j}.$$

\medskip

\f{\bf Definition 1.}
Let $a_{1}\geq a_{2}\geq\cdots\geq a_{n}\geq0$ and $b_{1}, b_{2},..., b_{n}>0$ be real numbers.
Define a polynomial 
$$f^{b_{1}, b_{2},..., b_{n}}_{a_{1}, a_{2},..., a_{n}}(x)=(x-a_{1})(x-a_{2})\cdots(x-a_{n})+\sum_{1\leq i\leq n}b_{i}(x-a_{1})\cdots\widehat{(x-a_{i})}\cdots(x-a_{n}).$$

\medskip

\begin{lem}\label{lem1}
For numbers $a_{1}\geq a_{2}\geq\cdots\geq a_{n}\geq0$ and $b_{1}, b_{2},..., b_{n}>0$, let
 $f^{b_{1}, b_{2},..., b_{n}}_{a_{1}, a_{2},..., a_{n}}(x)$ be defined as above. Then $f^{b_{1}, b_{2},..., b_{n}}_{a_{1}, a_{2},..., a_{n}}(x)$ has $n$ real roots.
\end{lem}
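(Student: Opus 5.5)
Write $f=f^{b_1,\dots,b_n}_{a_1,\dots,a_n}$ and $g(x)=\prod_{j=1}^n(x-a_j)$. Then $f=g+\sum_i b_i\,g/(x-a_i)$, which away from the $a_i$ reads
$$f(x)=g(x)\Big(1+\sum_{i=1}^n\frac{b_i}{x-a_i}\Big).$$
The plan is the standard interlacing (secular equation) argument. Count roots of $f$ by sign changes: $n-1$ roots lie in the closed intervals between consecutive $a_i$'s, and one more lies below $a_n$. First I would treat the case of distinct $a_1>a_2>\cdots>a_n$, and then handle repeated values.

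\emph{Distinct case.} At $x=a_k$ every product except the $k$-th one vanishes, so
$$f(a_k)=b_k\prod_{j\neq k}(a_k-a_j).$$
Since $b_k>0$, the sign of $f(a_k)$ equals the sign of $\prod_{j\ne k}(a_k-a_j)$. There are exactly $k-1$ indices $j$ with $a_j>a_k$, so this sign is $(-1)^{k-1}$. Hence $f(a_k)$ and $f(a_{k+1})$ have opposite signs, and the intermediate value theorem gives a root in each open interval $(a_{k+1},a_k)$, for $k=1,\dots,n-1$. That accounts for $n-1$ roots.

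For the last root, $f(a_n)$ has sign $(-1)^{n-1}$, while $f$ is monic of degree $n$, so $f(x)$ has sign $(-1)^n$ as $x\to-\infty$. This produces a root in $(-\infty,a_n)$. We now have $n$ distinct real roots, so these are all the roots. Incidentally $f(x)>0$ for $x>a_1$, since both factors in the displayed formula are positive there. This fact is presumably what the next lemma about the largest root will use.

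\emph{Repeated values.} Suppose a value $a$ occurs with multiplicity $m\ge2$ among the $a_i$. Then every term of $f$ contains the factor $(x-a)^{m-1}$, so $(x-a)^{m-1}$ divides $f$. Dividing it out leaves the same kind of polynomial: the value $a$ appears once, and its coefficient $b$ is the sum of the $b_i$ attached to the copies of $a$, which is still positive. Indeed,
$$\sum_{i:a_i=a}b_i\,\frac{g}{x-a}=\Big(\sum_{i:a_i=a}b_i\Big)(x-a)^{m-1}\prod_{a_j\neq a}(x-a_j),$$
and each remaining term carries $(x-a)^m$. Applying the distinct case to the reduced polynomial gives $n'$ real roots, where $n'$ is the number of distinct values. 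Together with the factors $(x-a)^{m-1}$ removed, this gives $n$ real roots counted with multiplicity.

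\emph{Main obstacle.} This reduction step needs care: one must check that the quotient really has the form of Definition 1 with all $b$'s positive, as the display shows. An alternative route is a continuity argument, perturbing the $a_i$ to be distinct and using that limits of real-rooted monic polynomials are real-rooted. I would use the explicit reduction, because it also pins down where the roots lie.
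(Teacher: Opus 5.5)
Your proposal is correct and follows the paper's route. In the distinct case, the signs of $f(a_k)$ alternate, and the intermediate value theorem gives $n-1$ roots; with repeated values, you factor out $(x-c_i)^{m_i-1}$ and merge the corresponding $b$'s into positive sums. The only minor difference is that you locate the $n$-th root explicitly in $(-\infty,a_n)$ using the sign at $-\infty$, whereas the paper leaves it implicit: a real polynomial of degree $n$ with $n-1$ real roots must have its last root real.
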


\f{\bf Proof:} We prove the lemma in the following two cases. 

\medskip

\f{\bf Case 1.}  $a_{1}, a_{2},..., a_{n}$ are distinct, i.e., $a_{1}> a_{2}>\cdots> a_{n}\geq0$. 

\medskip

Clearly, $$f^{b_{1}, b_{2},..., b_{n}}_{a_{1}, a_{2},..., a_{n}}(a_{1})=b_{1}(a_{1}-a_{2})\cdots(a_{1}-a_{n})>0,$$
 and similarly
$$f^{b_{1}, b_{2},..., b_{n}}_{a_{1}, a_{2},..., a_{n}}(a_{2})<0,$$
$$......$$
$$f^{b_{1}, b_{2},..., b_{n}}_{a_{1}, a_{2},..., a_{n}}(a_{n-1})(-1)^{n}>0,$$
and
$$f^{b_{1}, b_{2},..., b_{n}}_{a_{1}, a_{2},..., a_{n}}(a_{n})(-1)^{n+1}\leq0.$$ 
By Rolle Theorem, we can find $n-1$ real roots respectively between $a_{i}$ and $a_{i+1}$ for all $1\leq i\leq n-1$. Hence, $f^{b_{1}, b_{2},..., b_{n}}_{a_{1}, a_{2},..., a_{n}}(x)$ must have $n$ real roots.

\medskip

\f{\bf Case 2.} There are exactly $r$ distinct numbers among $a_{1}, a_{2},..., a_{n}$, say $c_{1}> c_{2}>\cdots> c_{r}\geq0$.

\medskip

 Let $m_{i}$ denote the multiplicity of $c_{i}$ for any $1\leq i\leq r$. Then $m_{1}+m_{2}+\cdots+m_{r}=n$. Moreover,
 $$f^{b_{1}, b_{2},..., b_{n}}_{a_{1}, a_{2},..., a_{n}}(x)=(x-c_{1})^{m_{1}-1}(x-c_{2})^{m_{2}-1}\cdots (x-c_{r})^{m_{r}-1}f^{b'_{1}, b'_{2},..., b'_{r}}_{c_{1}, c_{2},..., c_{r}}(x),$$
  where
  $$b'_{i}=\sum^{m_{1}+m_{2}+\cdots+m_{i}}_{j=m_{1}+m_{2}+\cdots+m_{i-1}+1}b_{j}>0$$
   for any $1\leq i\leq r$. 
By the result of Case 1, we know that $f^{b'_{1}, b'_{2},..., b'_{r}}_{c_{1}, c_{2},..., c_{r}}(x)$ has $r$ real roots. Thus, $f^{b_{1}, b_{2},..., b_{n}}_{a_{1}, a_{2},..., a_{n}}(x)$ has $n$ real roots. \hfill$\Box$

\begin{lem}\label{lem2}
For $a_{1}\geq a_{2}\geq\cdots\geq a_{n}\geq0$ and $b_{1}, b_{2},..., b_{n}>0$,
let $f^{b_{1}, b_{2},..., b_{n}}_{a_{1}, a_{2},..., a_{n}}(x)$ be defined as above. Let $\rho$ be the largest root of $f^{b_{1}, b_{2},..., b_{n}}_{a_{1}, a_{2},..., a_{n}}(x)$. Then $a_{1}\geq\rho\geq\frac{b_{i}}{b_{1}+b_{i}}a_{1}+\frac{b_{1}}{b_{1}+b_{i}}a_{i}$ for any $2\leq i\leq n$.
\end{lem}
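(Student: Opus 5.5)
The plan is to pass from the polynomial $f=f^{b_{1},\dots,b_{n}}_{a_{1},\dots,a_{n}}$ to the rational function
$$g(x)=1+\sum_{1\leq j\leq n}\frac{b_{j}}{x-a_{j}},\qquad f(x)=(x-a_{1})\cdots(x-a_{n})\,g(x)\quad (x\notin\{a_{1},\dots,a_{n}\}),$$
and read off the location of $\rho$ from the sign and monotonicity of $g$. Lemma \ref{lem1} guarantees that all roots are real, so $\rho$ is well defined.

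\textbf{Upper bound.} For $x>a_{1}$ every factor $x-a_{j}$ is positive and every term $b_{j}/(x-a_{j})$ is positive. Hence $g(x)>0$ and $f(x)>0$. So $f$ has no root in $(a_{1},\infty)$, which gives $\rho\leq a_{1}$.

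\textbf{Lower bound, reduction.} First dispose of the case where $a_{1}$ has multiplicity at least $2$. The factorization in Case 2 of Lemma \ref{lem1} then contains the factor $(x-c_{1})^{m_{1}-1}$ with $c_{1}=a_{1}$, so $a_{1}$ is a root and $\rho=a_{1}$. Since the target quantity
$$t_{i}:=\frac{b_{i}}{b_{1}+b_{i}}a_{1}+\frac{b_{1}}{b_{1}+b_{i}}a_{i}$$
is a convex combination of $a_{1}$ and $a_{i}$, we have $t_{i}\leq a_{1}=\rho$ and we are done.

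So assume $a_{1}>a_{2}$. By the sign pattern in Case 1 of Lemma \ref{lem1}, applied to the reduced polynomial $f^{b'}_{c}$ with $c_{1}=a_{1}$ and $c_{2}=a_{2}$, we get $f(a_{1})>0$ and $f(a_{2})\leq 0$. Together with the upper bound this yields $\rho\in(a_{2},a_{1})$, or $\rho=a_{2}$ in degenerate situations; in every case $\rho\geq a_{2}$. Moreover, on $(a_{2},a_{1})$ the function $g$ is continuous and strictly decreasing, since each term $b_{j}/(x-a_{j})$ is decreasing there. It tends to $+\infty$ as $x\to a_{2}^{+}$ (assuming distinct values after merging) and to $-\infty$ as $x\to a_{1}^{-}$. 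So $\rho$ is the unique zero of $g$ in this interval.

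\textbf{Lower bound, main computation.} Note $a_{i}<t_{i}<a_{1}$ when $a_{i}<a_{1}$.
\begin{itemize}
\item If $a_{2}\geq t_{i}$, then $\rho\geq a_{2}\geq t_{i}$ immediately.
\item Otherwise $t_{i}\in(a_{2},a_{1})$, and every $a_{j}$ with $j\geq2$ lies below $t_{i}$. The key identity is
$$\frac{b_{1}}{t_{i}-a_{1}}+\frac{b_{i}}{t_{i}-a_{i}}=-\frac{b_{1}+b_{i}}{a_{1}-a_{i}}+\frac{b_{1}+b_{i}}{a_{1}-a_{i}}=0,$$
which follows from $t_{i}-a_{1}=-\frac{b_{1}(a_{1}-a_{i})}{b_{1}+b_{i}}$ and $t_{i}-a_{i}=\frac{b_{i}(a_{1}-a_{i})}{b_{1}+b_{i}}$. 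Hence
$$g(t_{i})=1+\sum_{j\neq1,i}\frac{b_{j}}{t_{i}-a_{j}}>0.$$
Since $g$ is decreasing on $(a_{2},a_{1})$ with its zero at $\rho$, this forces $\rho>t_{i}$.
\end{itemize}
Note that $t_{i}$ is exactly the point where the two terms indexed by $1$ and $i$ cancel.

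The main obstacle is bookkeeping rather than the computation itself. One must make the monotonicity argument rigorous when some $a_{j}$ coincide, including possibly $a_{j}=t_{i}$ or $a_{2}=a_{i}$. I would handle this by working throughout with the merged distinct values $c_{1}>\cdots>c_{r}$ and merged weights $b'$ from Lemma \ref{lem1}, where $g$ is unchanged. If $i$ lies in the block of $c_{2}$, or in a lower block, then the only terms involved are positive, so this causes no difficulty.
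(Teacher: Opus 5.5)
Your proposal is correct and is essentially the paper's argument: the paper also evaluates at $t_i$, via the splitting $f(x)=(x-a_1)(x-a_i)\,f^{\,b_2,\dots,\widehat{b_i},\dots,b_n}_{a_2,\dots,\widehat{a_i},\dots,a_n}(x)+\bigl(b_1(x-a_i)+b_i(x-a_1)\bigr)\prod_{j\neq 1,i}(x-a_j)$. This splitting is just your identity $f=\prod_j(x-a_j)\,g$ with the cancelling pair $b_1/(x-a_1)+b_i/(x-a_i)$ isolated. The only difference is that the paper argues by contradiction (if $\rho<t_i$ then $f(t_i)>0$ because $f$ is monic, yet the splitting forces $f(t_i)\le 0$), which makes your monotonicity analysis of $g$, the separate case $a_1=a_2$, and the multiplicity bookkeeping unnecessary; that bookkeeping is vacuous in your version too, since you only evaluate $g$ at $t_i>a_2\ge a_j$.
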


\f{\bf Proof:} Clearly, $f^{b_{1}, b_{2},..., b_{n}}_{a_{1}, a_{2},..., a_{n}}(x)>0$ for any $x>a_{1}$. This implies that $\rho\leq a_{1}$. Since
$$f^{b_{1}, b_{2},..., b_{n}}_{a_{1}, a_{2},..., a_{n}}(a_{2})=b_{2}(a_{2}-a_{1})(a_{2}-a_{3})\cdots(a_{2}-a_{n})\leq0,$$
we have $\rho\geq a_{2}$ by Rolle Theorem.

Let  $2\leq i\leq n$ be fixed. Set $a=\frac{b_{i}}{b_{1}+b_{i}}a_{1}+\frac{b_{1}}{b_{1}+b_{i}}a_{i}$. Clearly, $a\leq a_{1}$. It suffices to show $\rho\geq a$. Suppose that $\rho< a$.   Then 
$$f^{b_{1}, b_{2},..., b_{n}}_{a_{1}, a_{2},..., a_{n}}(a)>0.$$
 Since $\rho\geq a_{2}$ and $a>\rho$, we have $a>a_{j}$ for any $2\leq j\leq n$. Recall $a\leq a_{1}$. Clearly, 
$$f^{b_{1}, b_{2},..., b_{n}}_{a_{1}, a_{2},..., a_{n}}(x)=(x-a_{1})(x-a_{i})f^{b_{2},b_{3},..., \widehat{b_{i}},..., b_{n}}_{a_{2}, a_{3},...,\widehat{a_{i}},..., a_{n}}(x)+$$
$$\left(b_{1}(x-a_{i})+b_{i}(x-a_{1})\right)(x-a_{2})\cdots\widehat{(x-a_{i})}\cdots(x-a_{n}).$$
Since $a_{1}\geq a>a_{j}$ for any $2\leq j\leq n$, we have 
$$f^{b_{2},b_{3},..., \widehat{b_{i}},..., b_{n}}_{a_{2}, a_{3},...,\widehat{a_{i}},..., a_{n}}(a)\geq0.$$
 Note that $b_{1}(a-a_{i})+b_{i}(a-a_{1})=0$ as $a=\frac{b_{i}}{b_{1}+b_{i}}a_{1}+\frac{b_{1}}{b_{1}+b_{i}}a_{i}$. It follows that 
$$f^{b_{1}, b_{2},..., b_{n}}_{a_{1}, a_{2},..., a_{n}}(a)\leq0,$$
 a contradiction as $f^{b_{1}, b_{2},..., b_{n}}_{a_{1}, a_{2},..., a_{n}}(a)>0$. 
 Hence, we must have $\rho\geq a$. \hfill$\Box$

\section{Proof of Theorem \ref{main}}

To prove Theorem  \ref{main}, we first introduce the eigenvalue interlacing technique.
Let $G$ be a graph of order $n$. For a partition $\left\{V_{1},V_{2},...,V_{m}\right\}$ of $V(G)$, the {\em quotient matrix} of $G$ corresponding to this partition is an $m\times m$ matrix $(b_{ij})$, where $b_{ij}=\frac{e_{G}(V_{i},V_{j})}{|V_{i}|}$ for any $1\leq i\neq j\leq m$ and $b_{ii}=\frac{2e(G[V_{i}])}{|V_{i}|}$ for $1\leq i\leq m$. The following interlacing theorem is taken from \cite{BH} (see Chapter 3).

\begin{theorem} {\rm (Brouwer and  Haemers \cite{BH})} \label{interlacing}
Let $G$ be a graph of order $n$, and let $B$ be the quotient matrix of $G$ corresponding to partition $\left\{V_{1},V_{2},...,V_{m}\right\}$ of $V(G)$. Then $\lambda_{i+n-m}(G)\leq \lambda_{i}(B)\leq\lambda_{i}(G)$ for any $1\leq i\leq m$. 
\end{theorem}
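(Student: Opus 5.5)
The "statement immediately above" is the interlacing theorem of Brouwer and Haemers (Theorem \ref{interlacing}). It is a standard result, and I would prove it by Cauchy interlacing applied to a compression of $A=A(G)$.

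\emph{Step 1: the characteristic matrix.} Let $P$ be the $n\times m$ characteristic matrix of the partition: $P_{u,j}=1$ if $u\in V_j$ and $0$ otherwise. Let $D=P^{T}P=\mathrm{diag}(|V_1|,\dots,|V_m|)$. A direct count shows that $(P^{T}AP)_{ij}=e_G(V_i,V_j)$ for $i\neq j$ and $(P^{T}AP)_{ii}=2e(G[V_i])$. Hence $B=D^{-1}P^{T}AP$.

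\emph{Step 2: symmetrize.} $B$ need not be symmetric, but it is similar to $\widetilde B=D^{1/2}BD^{-1/2}=Q^{T}AQ$, where $Q=PD^{-1/2}$. The columns of $Q$ are the normalized indicator vectors of the parts. They have disjoint supports, so $Q^{T}Q=I_m$. Therefore $B$ has real eigenvalues $\lambda_i(B)=\lambda_i(Q^{T}AQ)$.

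\emph{Step 3: Cauchy interlacing for $Q^{T}AQ$.} Extend $Q$ to an orthogonal $n\times n$ matrix $U=[Q\ R]$. Then $U^{T}AU$ has the same spectrum as $A$, and $Q^{T}AQ$ is its leading $m\times m$ principal submatrix. So it suffices to prove $\lambda_{i+n-m}(M)\le\lambda_i(M')\le\lambda_i(M)$ for a principal submatrix $M'$ of a real symmetric $M$.

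For the upper bound I would use Courant--Fischer. $\lambda_i(M')$ is the maximum, over $i$-dimensional subspaces $W\subseteq\mathbb{R}^m$, of $\min_{0\ne x\in W}x^{T}M'x/x^{T}x$. Embedding $W$ into $\mathbb{R}^n$ by padding with zeros preserves the Rayleigh quotients. The maximum over the larger family of $i$-dimensional subspaces of $\mathbb{R}^n$ is $\lambda_i(M)$, which gives $\lambda_i(M')\le\lambda_i(M)$.

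For the lower bound I would apply the same argument to $-M$ and $-M'$, using $\lambda_i(-M')=-\lambda_{m+1-i}(M')$ and $\lambda_i(-M)=-\lambda_{n+1-i}(M)$. Then $\lambda_{m+1-i}(-M')\le\lambda_{m+1-i}(-M)$ translates to $\lambda_i(M')\ge\lambda_{i+n-m}(M)$. Alternatively, the min-max form restricted to codimension-$(i-1)$ subspaces gives the same bound directly.

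There is no real obstacle. The only point needing care is Step 2: passing from the non-symmetric $B$ to the orthogonal compression $Q^{T}AQ$ via the similarity by $D^{1/2}$. Once that is in place, the rest is the classical Cauchy interlacing argument.
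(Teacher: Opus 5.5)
Your proof is correct. The paper gives no proof of its own and simply quotes the result from Brouwer and Haemers, whose standard argument is essentially yours: $B=D^{-1}P^{T}AP$ is similar to $Q^{T}AQ$ with $Q=PD^{-1/2}$ and $Q^{T}Q=I_m$, and Cauchy interlacing then gives the result. Your handling of that last step (extending $Q$ to an orthogonal matrix to reduce to a principal submatrix, applying Courant--Fischer, and flipping signs for the lower bound) is one of the usual equivalent ways of doing it.
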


Now we are ready to prove Theorem \ref{main}.

\medskip

\f{\bf Proof of Theorem \ref{main}.} Denote by $t=t(G)$. We can assume $t<1$, otherwise there is nothing to prove. It suffices to prove $t\geq\frac{d+1}{d}(d-\lambda_{2}(G))$. By definition of $t$, there is a cut $S\neq\emptyset$ of $G$ such that $c(G-S)=\frac{|S|}{t}$. Set $c=c(G-S)$ and $s=|S|$. Then $s=tc<c$. 

Let $Q_{1},Q_{2},...,Q_{c}$ be the components of $G-S$. Then $V(G)=S\cup(\cup_{1\leq i\leq c}V(Q_{i}))$. For any $1\leq i\leq c$, set $|V(Q_{i})|=n_{i}$ and $e_{G}(S,V(Q_{i}))=e_{i}\geq1$. Now partition $V(G)$ into $c+1$ parts: $S, V(Q_{1}),v(Q_{2}),...,V(Q_{c})$. 
The quotient matrix $B$ of $G$ corresponding to this partition is equal to
\begin{center}
$\left(\begin{array}{ccccc}
 d-\frac{\sum_{1\leq i\leq c}e_{i}}{s}&\frac{e_{1}}{s}&\frac{e_{2}}{s}&\cdots&\frac{e_{c}}{s}\\
 \frac{e_{1}}{n_{1}}&d-\frac{e_{1}}{n_{1}}&0&\cdots&0\\
\frac{e_{2}}{n_{2}}&0&d-\frac{e_{2}}{n_{2}}&\cdots&0\\
\cdots&\cdots&\cdots&\cdots&\cdots\\
\frac{e_{c}}{n_{c}}&0&0&\cdots&d-\frac{e_{c}}{n_{c}}\\
\end{array}\right)$.
   \end{center}
For $1\leq i\leq c+1$, let $\lambda_{i}(B)$ denote the $i$-th largest eigenvalue of $B$. By Theorem \ref{interlacing}, we have 
$$\lambda_{2}(G)\geq\lambda_{2}(B).$$
By a  calculation, the characteristic polynomial $g(B,x)$ of $B$ satisfies 
$$g(B,x)=(x-d)\cdot h(B,x),$$ where
 $$h(B,x)=(x-d+\frac{e_{1}}{n_{1}})(x-d+\frac{e_{2}}{n_{2}})\cdots(x-d+\frac{e_{c}}{n_{c}})
+\sum_{1\leq i\leq n}\frac{e_{i}}{s}(x-d+\frac{e_{1}}{n_{1}})\cdots\widehat{(x-d+\frac{e_{i}}{n_{i}})}
\cdots(x-d+\frac{e_{c}}{n_{c}}).$$ 

Without loss of generality, assume $\frac{e_{1}}{n_{1}}\leq\frac{e_{2}}{n_{2}}\leq\cdots\leq\frac{e_{c}}{n_{c}}$. Note $\frac{e_{i}}{s}>0$ for $1\leq i\leq c$ as $G$ is connected. Clearly, 
$$h(B,x)=f^{\frac{e_{1}}{s}, \frac{e_{2}}{s},..., \frac{e_{c}}{s}}_{d-\frac{e_{1}}{n_{1}}, d-\frac{e_{2}}{n_{2}},..., d-\frac{e_{c}}{n_{c}}}(x).$$
Let $\rho$ denote the largest root of $f^{\frac{e_{1}}{s}, \frac{e_{2}}{s},..., \frac{e_{c}}{s}}_{d-\frac{e_{1}}{n_{1}}, d-\frac{e_{2}}{n_{2}},..., d-\frac{e_{c}}{n_{c}}}(x)$. Clearly, $\rho=\lambda_{2}(B)$. Thus, $\lambda_{2}(G)\geq\rho$. 
By Lemma \ref{lem2}, we have $$\rho\geq\frac{\frac{e_{i}}{s}}{\frac{e_{1}}{s}+\frac{e_{i}}{s}}(d-\frac{e_{1}}{n_{1}})
+\frac{\frac{e_{1}}{s}}{\frac{e_{1}}{s}+\frac{e_{i}}{s}}(d-\frac{e_{i}}{n_{i}})
=d-\frac{e_{1}\cdot e_{i}}{e_{1}+e_{i}}(\frac{1}{n_{1}}+\frac{1}{n_{i}}).$$
 for any $2\leq i\leq c$.
 Thus,
  $$\lambda_{2}(G)\geq d-\frac{e_{1}\cdot e_{i}}{e_{1}+e_{i}}(\frac{1}{n_{1}}+\frac{1}{n_{i}})$$
 for any $2\leq i\leq c$.

Recall that $s\leq c-1$. Now we show that there are at least two components $Q$ of $G-S$ such that $e_{G}(V(Q),S)<d$. Otherwise, we have 
$$sd\geq e_{G}(S,V(G)-S)>d(c-1)\geq sd,$$
a contradiction.
Hence, there are at least two components $G-S$, say $Q'_{1}$ and $Q'_{2}$, such that $e_{G}(V(Q'_{i}),S)<d$ for $i=1,2$. Without loss of generality, assume that $Q'_{1}$ is a  component of $G-S$ with $e_{G}(V(Q'_{1}),S)$ minimum among all the $c$ components of $G-S$. And $Q'_{2}$ is a  component of $G-S$ with $e_{G}(V(Q'_{2}),S)$ minimum among other $c-1$ components (except $Q'_{1}$) of $G-S$. Let $e'_{i}=e_{G}(V(Q'_{i}),S)$ for $i=1,2$. It follows that
$$e'_{1}\leq\frac{e'_{1}+e'_{2}}{2}\leq\frac{e_{G}(S,V(G)-S)}{c}\leq\frac{sd}{c}=td.$$
Let $n'_{i}=|Q'_{i}|$ for $i=1,2$. We claim that $n'_{i}\geq d+1$, otherwise 
$$e_{G}(V(Q'_{i}),S)\geq(d+1-n'_{i})n'_{i}\geq d,$$
a contradiction. Now we have the following two cases.

\medskip

\f{\bf Case 1.} There is another component of $G-S$ (except $Q'_{1}$), say $Q'$, such that $\frac{e_{G}(S,Q')}{|Q'|}\leq\frac{e'_{1}}{n'_{1}}$.

\medskip

In this case, we have $$\frac{2e(Q'_{1})}{n'_{1}}=d-\frac{e'_{1}}{n'_{1}}\geq d-\frac{td}{d+1}$$
and
$$\frac{2e(Q')}{|Q'|}=d-\frac{e_{G}(S,Q')}{|Q'|}\geq d-\frac{e'_{1}}{n'_{1}}\geq d-\frac{td}{d+1}.$$
By eigenvalue interlacing (see \cite{BH}), we have
$$\lambda_{2}(G)\geq\min\left\{\frac{2e(Q'_{1})}{n'_{1}},\frac{2e(Q')}{|Q'|}\right\}\geq
d-\frac{td}{d+1}$$
It follows that $t\geq\frac{d+1}{d}(d-\lambda_{2}(G)),$ as desired.

\medskip

\f{\bf Case 2.} For any component $Q\neq Q'_{1}$ of $G-S$, it holds that $\frac{e_{G}(S,Q)}{|Q|}>\frac{e'_{1}}{n'_{1}}$.

\medskip

Recall that $\frac{e_{1}}{n_{1}}\leq\frac{e_{2}}{n_{2}}\leq\cdots\leq\frac{e_{c}}{n_{c}}$ for all the components $Q_{1},Q_{2},...,Q_{c}$ of $G-S$. In this case, we must have $Q'_{1}=Q_{1}$. There is a $2\leq i_{0}\leq c$ such that $Q'_{2}=Q_{i_{0}}$. Then  (see above) $$\frac{e_{1}+e_{i_{0}}}{2}=\frac{e'_{1}+e'_{2}}{2}\leq td.$$ 
Moreover, $n_{1}=n'_{1}\geq d+1$ and $n_{i_{0}}=n'_{2}\geq d+1$.

Recall that $\lambda_{2}(G)\geq d-\frac{e_{1}\cdot e_{i}}{e_{1}+e_{i}}(\frac{1}{n_{1}}+\frac{1}{n_{i}})$
 for any $2\leq i\leq c$. In particular, 
 $$\lambda_{2}(G)\geq d-\frac{e_{1}\cdot e_{i_{0}}}{e_{1}+e_{i_{0}}}(\frac{1}{n_{1}}+\frac{1}{n_{i_{0}}})\geq
 d-\frac{e_{1}+e_{i_{0}}}{4}(\frac{1}{n_{1}}+\frac{1}{n_{i_{0}}})\geq d-\frac{td}{d+1}.$$
It follows that $t\geq\frac{d+1}{d}(d-\lambda_{2}(G)),$ as desired.
This completes the proof.
\hfill$\Box$

\medskip

\f{\bf Data availability statement}

\medskip

There is no associated data.

\medskip

\f{\bf Declaration of Interest Statement}

\medskip

There is no conflict of interest.

\end{document}